\title{Nontrivial lower bounds for the $p$-adic valuations of some type of rational numbers}
\author{\sc Bakir FARHI \\
Laboratoire de Mathématiques appliquées \\
Faculté des Sciences Exactes \\
Université de Bejaia, 06000 Bejaia, Algeria \\[1mm]
\href{mailto:bakir.farhi@gmail.com}{bakir.farhi@gmail.com} \\[1mm]
\url{http://farhi.bakir.free.fr/}
}
\date{}
\let\up=\textsuperscript
\def\R{{\mathbb R}}
\def\Q{{\mathbb Q}}
\def\C{{\mathbb C}}
\def\N{{\mathbb N}}
\def\Z{{\mathbb Z}}
\def\lcm{\mathrm{lcm}}
\def\restmod#1#2{#1\ (\mathrm{mod}\ #2)}
\def\idem{\leavevmode\hbox to 10.6mm{\vrule height .63ex depth -.59ex
    width 10mm\hfill}}
\theoremstyle{plain}
\numberwithin{equation}{section}
\newtheorem{thm}{Theorem}[section]
\theoremstyle{definition}
\theoremstyle{remark}
\newtheorem{rmk}[thm]{Remark}
\begin{document}
\maketitle

\begin{abstract}
In this paper, we will show that the $p$-adic valuation (where $p$ is a given prime number) of some type of rational numbers is unusually large. This generalizes the very recent results by the author and by A. Dubickas, which are both related to the special case $p = 2$. The crucial point for obtaining our main result is the fact that the $p$-adic valuation of the rational numbers in question is unbounded from above. We will confirm this fact by three different methods; the first two are elementary while the third one leans on the $p$-adic analysis.        
\end{abstract}

\noindent\textbf{MSC 2010:} Primary 11B83; Secondary 11A41, 05A10, 05A19, 11B65. \\
\textbf{Keywords:} $p$-adic valuations, binomial coefficients, combinatorial identities, $p$-adic analysis.

\section{Introduction and Notation}\label{sec1}

Throughout this paper, we let $\N$ denote the set of positive integers and $\N_0 := \N \cup \{0\}$ denote the set of non-negative integers. For $x \in \R$, we let $\lfloor x\rfloor$ denote the integer part of $x$. For a given prime number $p$ and a given non-zero rational number $r$, we let $\vartheta_p(r)$ denote the usual $p$-adic valuation of $r$; if in addition $r$ is positive then we let $\log_p(r)$ denote its logarithm to the base $p$ (i.e., $\log_p(r) := \frac{\log{r}}{\log{p}}$). Next, the least common multiple of given positive integers $u_1 , u_2 , \dots , u_n$ ($n \in \N$) is denoted by $\lcm(u_1 , u_2 , \dots , u_n)$ or by $\lcm\{u_1 , u_2 , \dots , u_n\}$ if this is more convenient. In several places of this paper, we will use the immediate estimate $\vartheta_p(n) \leq \log_p(n)$ (for any prime $p$ and any $n \in \N$). We also often use the immediate formula $\vartheta_p\left(\lcm(1 , 2 , \dots , n)\right) = \left\lfloor\log_p(n)\right\rfloor$ (for any prime $p$ and any $n \in \N$). At the end of the paper, we need to use the $p$-adic logarithm function which we denote by $L_p$ (to differentiate from the notation $\log_p$, which is reserved to denote the logarithm to the base $p$). With the usual notation $\Q_p$ for the field of $p$-adic numbers, $\C_p$ for the field of the $p$-adic complex numbers, and ${|\cdot|}_p$ for the usual $p$-adic absolute value on $\C_p$, recall that $L_p$ can be defined by:
$$
- L_p(1 - x) := \sum_{k = 1}^{+ \infty} \frac{x^k}{k} ~~~~~~~~~~ (\forall x \in \C_p , {|x|}_p < 1) .
$$
(See \cite{kob}). The fundamental property of $L_p$ is that it satisfies the functional equation:
$$
L_p(u v) = L_p(u) + L_p(v)
$$
(for all $u , v \in \C_p$, with ${|u - 1|}_p < 1$ and ${|v - 1|}_p < 1$).

In \cite{far1,far2}, the author have obtained nontrivial lower bounds for the $2$-adic valuation of the rational numbers of the form $\sum_{k = 1}^{n} \frac{2^k}{k}$ ($n \in \N$). The stronger one is
\begin{equation}\label{eqint1}
\vartheta_2\left(\sum_{k = 1}^{n} \frac{2^k}{k}\right) \geq n - \left\lfloor\log_2(n)\right\rfloor ~~~~~~~~~~ (\forall n \in \N) .
\end{equation}
In \cite{far2}, the author also posed the problem of generalizing \eqref{eqint1} to other prime numbers $p$ other than $p = 2$. In \cite{dub}, Dubickas has found arguments to improving and optimizing \eqref{eqint1} by leaning only on the fact that the sequence $\left\{\vartheta_2\left(\sum_{k = 1}^{n} \frac{2^k}{k}\right)\right\}_{n \geq 1}$ is unbounded from above. However, he does not established any way to prove this fact without using \eqref{eqint1}. The main result in \cite{dub} states that we have for any $n \in \N$:
\begin{equation}\label{eqint2}
\vartheta_2\left(\sum_{k = 1}^{n} \frac{2^k}{k}\right) \geq (n + 1) - \log_2(n + 1) ,
\end{equation}
with equality if and only if $n$ has the form $n = 2^{\alpha} - 1$ ($\alpha \in \N$).

The goal of this paper is twofold. On the one hand, we expand and improve the arguments in \cite{dub} to establish a general result providing to us nontrivial lower bounds for the $p$-adic valuation of a sum of rational numbers under some conditions (see Theorem \ref{t2}). On the other hand, we solve the problem posed in \cite{far2} by generalizing \eqref{eqint1} and \eqref{eqint2} to other prime numbers. Precisely, we show (in different ways) that for any prime number $p$ and any non-multiple integer $a$ of $p$, the sequence
\begin{equation}\label{eqint3}
\left\{\vartheta_p\left(\sum_{k = 1}^{n} \left(\frac{1}{a^k} + \frac{1}{(p - a)^k}\right) \frac{p^k}{k}\right)\right\}_{n \geq 1}
\end{equation}
is unbounded from above. Then, by using our general theorem \ref{t2}, we derive an optimal lower bound for the sequence in \eqref{eqint3}. It must be noted that the crucial point of the non-boundness from above of the sequence in \eqref{eqint3} is established by three methods. The first two are elementary and effective while the third one leans on the $p$-adic analysis and it is ineffective; precisely, it uses the function $L_p$ described above. Personally, we consider that the deep reason why the sequence in \eqref{eqint3} is unbounded from above is rather given by the third method.    

\section{The results and the proofs}

Our main result is the following:

\begin{thm}\label{t1}
Let $p$ be a prime number and $a$ be an integer not multiple of $p$. Then we have for all positive integer $n$:
\begin{equation}\label{eq4}
\vartheta_p\left(\sum_{k = 1}^{n}\left(\frac{1}{a^k} + \frac{1}{(p - a)^k}\right) \frac{p^k}{k}\right) \geq (n + 1) - \log_p\left(\frac{n + 1}{2}\right) .
\end{equation}
In addition, this inequality becomes an equality if and only if $n$ has the form $n = 2 p^{\alpha} - 1$ {\rm(}$\alpha \in \N_0${\rm)}.
\end{thm}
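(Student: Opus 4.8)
The plan is to regard the summand
$$
t_k := \left(\frac{1}{a^k} + \frac{1}{(p-a)^k}\right)\frac{p^k}{k} = \frac{(p-a)^k + a^k}{a^k (p-a)^k}\cdot\frac{p^k}{k}
$$
as the general term of a $p$-adically convergent series, to prove that the full series sums to $0$, and then to read off both the inequality and its equality case from the valuations of the tail. First I would compute $\vartheta_p(t_k)$ exactly. Since $p \nmid a$ and $p \nmid (p-a)$, the denominator $a^k(p-a)^k$ is a $p$-adic unit, so $\vartheta_p(t_k) = \vartheta_p\big((p-a)^k + a^k\big) + k - \vartheta_p(k)$. Expanding $(p-a)^k$ by the binomial theorem and separating the parity of $k$, the term of lowest order in $p$ is $((-a)^k + a^k)$, which vanishes for $k$ odd and equals $2a^k$ for $k$ even; tracking the next binomial term (and treating $p=2$ separately, where $2a^k$ already carries one factor of $p$) I expect to obtain the uniform values
$$
\vartheta_p(t_k) = \begin{cases} k+1 & (k \text{ odd}),\\[1mm] k - \vartheta_p\!\left(\tfrac{k}{2}\right) & (k \text{ even}). \end{cases}
$$
The appearance of $\vartheta_p(k/2)$ in the even case is exactly what will produce the $\tfrac{n+1}{2}$ inside the logarithm.

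Next I would identify the total sum. Using $\sum_{k\ge1}\frac{x^k}{k} = -L_p(1-x)$ for $\left|x\right|_p<1$, and noting $\left|p/a\right|_p = \left|p/(p-a)\right|_p = p^{-1}<1$, the two pieces of the series sum to $-L_p\!\left(\frac{a-p}{a}\right) - L_p\!\left(\frac{-a}{p-a}\right)$. These two arguments are mutually inverse, and each differs from $1$ by $-p/a$ respectively $-p/(p-a)$, both of $p$-adic absolute value $p^{-1}<1$; hence the functional equation of $L_p$ gives $L_p(u)+L_p(u^{-1}) = L_p(1) = 0$, so that $\sum_{k\ge1} t_k = 0$ in $\Q_p$. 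Consequently $S_n := \sum_{k=1}^n t_k = -\sum_{k>n} t_k$, and since $\vartheta_p(t_k)\to+\infty$ the ultrametric inequality yields $\vartheta_p(S_n) \ge \min_{k>n}\vartheta_p(t_k)$, with equality as soon as the minimum is attained at a single index.

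To reach the stated bound I would compare the tail valuations with $h(x) := x - \log_p(x/2)$, which is increasing for $x\ge 2$. For even $k>n$ one has $\vartheta_p(t_k) = k - \vartheta_p(k/2) \ge k - \log_p(k/2) = h(k) \ge h(n+1)$, while for odd $k>n$ one has $\vartheta_p(t_k) = k+1 \ge n+2 > h(n+1)$. Since $h(n+1) = (n+1) - \log_p\!\left(\frac{n+1}{2}\right)$, every tail term has valuation at least this quantity, which proves \eqref{eq4}.

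Finally, for the equality analysis: if $n+1 = 2p^{\alpha}$ then $k=n+1$ is even with $\vartheta_p(t_{n+1}) = (n+1) - \alpha = (n+1) - \log_p\!\left(\frac{n+1}{2}\right)$, and by the strict monotonicity of $h$ (together with the odd-index estimate) this is the \emph{unique} minimizer of the tail valuations, so $\vartheta_p(S_n)$ equals the bound. Conversely, if $n+1$ is odd the smallest even index in the tail is $n+2$, forcing $\min_{k>n}\vartheta_p(t_k)\ge h(n+2) > h(n+1)$ and thus a strict inequality; and if $n+1$ is even but $(n+1)/2$ is not a power of $p$, then $\log_p\!\left(\frac{n+1}{2}\right)$ is irrational, so the integer $\vartheta_p(S_n)$ cannot equal the bound. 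The main obstacle I anticipate is the exact term-by-term valuation in the first step, in particular the uniform treatment of $p=2$ (where the leading coefficient $2a^k$ is no longer a unit) and the verification that the displayed leading binomial term genuinely dominates all higher ones; once the values $\vartheta_p(t_k)$ are secured, the remaining steps reduce to monotonicity of $h$ and routine ultrametric bookkeeping.
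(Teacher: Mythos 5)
Your proposal is correct, and all of its ingredients occur somewhere in the paper, but you have organized them into a genuinely leaner argument. The paper factors the proof through a general lemma (Theorem \ref{t2}), which extracts $\vartheta_p\bigl(\sum_{k=1}^{n} r_k\bigr) \geq \min_{k \geq n+1} \vartheta_p(r_k)$ from the mere \emph{unboundedness} of the partial-sum valuations, and a separate statement (Theorem \ref{t3}) establishing that unboundedness by three methods, two of them elementary. You instead commit from the start to the strongest version of the third method --- the exact identity $\sum_{k \geq 1} t_k = 0$ in $\Q_p$, proved via $L_p(u) + L_p(u^{-1}) = 0$ exactly as in the paper --- which lets you write $S_n = -\sum_{k > n} t_k$ and obtain the same tail inequality directly from the ultrametric inequality, bypassing Theorem \ref{t2} entirely. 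You also compute $\vartheta_p(t_k)$ exactly ($k+1$ for $k$ odd, $k - \vartheta_p(k/2)$ for $k$ even; both values check out, including $p = 2$ via odd squares being congruent to $1$ modulo $8$, and the odd case via the $j = 1$ binomial term $k p a^{k-1}$ dominating), whereas the paper only proves the one-sided bound $\vartheta_p(r_k) \geq k - \log_p(k/2)$ and computes an exact value solely at $k = n+1$ when $n + 1 = 2p^{\alpha}$; your exact values turn the equality case into a clean unique-minimizer statement. What you lose is modularity and effectivity: Theorem \ref{t2} applies to other sums whose partial valuations are merely unbounded, and the paper's first two proofs of Theorem \ref{t3} (via \eqref{eq9} and \eqref{eq10}, and via Taylor polynomials) are elementary and yield the explicit intermediate bound $n + 1 - \lfloor \log_p(n) \rfloor$ without $p$-adic analysis. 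What you gain is brevity and transparency. When writing it up, do justify the strong-triangle \emph{equality} for an infinite sum with a strict unique minimizer (apply the isosceles principle to $S_n + t_{n+1} = -\sum_{k \geq n+2} t_k$), and note that your integrality/irrationality argument in the converse direction should also confirm that $\log_p(m) \in \Q$ forces $m$ to be a power of $p$ for a positive integer $m$; both points are routine but worth a line each.
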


Note that Theorem \ref{t1} generalizes the recent results of the author \cite{far1,far2} and Dubickas \cite{dub}, which are both related to the particular case $p = 2$. Especially, if we take $p = 2$ and $a = 1$ in Theorem \ref{t1}, we exactly obtain (after some obvious simplifications) the main result of \cite{dub}, stating that:
$$
\vartheta_2\left(\sum_{k = 1}^{n} \frac{2^k}{k}\right) \geq (n + 1) - \log_2(n + 1) ~~~~~~~~~~ (\forall n \in \N) ,
$$
with equality if and only if $n$ has the form $(2^{\alpha} - 1)$ ($\alpha \in \N$).

The proof of Theorem \ref{t1} is based in part on the following result which can serve us well in other situations for bounding from below the $p$-adic valuation of a sum of rational numbers when it is unbounded from above. It must be also noted that the result below is obtained by generalizing the arguments in \cite{dub}. 

\begin{thm}\label{t2}
Let $p$ be a fixed prime number and ${(r_n)}_{n \geq 1}$ be a sequence of rational numbers such that the sequence $\left\{\vartheta_p\left(\sum_{k = 1}^{n} r_k\right)\right\}_{n \geq 1}$ is unbounded from above. Let also ${(\ell_k)}_{k \geq 2}$ be an increasing real sequence satisfying the property:
\begin{equation}\label{eq1}
\ell_k \leq \vartheta_p(r_k) ~~~~~~~~~~ (\forall k \geq 2) .
\end{equation}
Then we have for any positive integer $n$:
\begin{equation}\label{eq2}
\vartheta_p\left(\sum_{k = 1}^{n} r_k\right) \geq \min_{k \geq n + 1} \vartheta_p(r_k) \geq \ell_{n + 1} .
\end{equation}
In addition, the inequality $\vartheta_p\left(\sum_{k = 1}^{n} r_k\right) \geq \ell_{n + 1}$ becomes an equality if and only if we have
\begin{equation}\label{eq3}
\min_{k \geq n + 1} \vartheta_p(r_k) = \ell_{n + 1} .
\end{equation}
\end{thm}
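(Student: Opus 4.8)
Write $S_n := \sum_{k=1}^{n} r_k$ for brevity, and recall the two facts about the $p$-adic valuation that I will use repeatedly: the ultrametric inequality $\vartheta_p(x + y) \geq \min(\vartheta_p(x), \vartheta_p(y))$, together with its sharp form asserting equality whenever $\vartheta_p(x) \neq \vartheta_p(y)$. The plan is first to check that the quantity $\mu_n := \min_{k \geq n+1} \vartheta_p(r_k)$ in \eqref{eq2} is actually well-defined (a genuine, finite minimum). Since $(\ell_k)$ is increasing and satisfies \eqref{eq1}, every term with $k \geq n+1$ obeys $\vartheta_p(r_k) \geq \ell_k \geq \ell_{n+1}$, so the set $\{\vartheta_p(r_k) : k \geq n+1\}$ consists of integers (or $+\infty$) bounded below by the real number $\ell_{n+1}$; hence its infimum is attained, provided not all these terms are $+\infty$. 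The latter cannot happen, for if $r_k = 0$ for all $k \geq n+1$ then $S_m = S_n$ for every $m \geq n$, contradicting the unboundedness hypothesis. Thus $\mu_n$ is a finite integer with $\mu_n \geq \ell_{n+1}$, which already accounts for the second inequality in \eqref{eq2}.

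For the first inequality in \eqref{eq2}, the idea is to exploit the unboundedness to reach far-out partial sums of very large valuation and then telescope back to $S_n$. Concretely, because $\{\vartheta_p(S_m)\}_{m \geq 1}$ is unbounded above while only finitely many indices are $\leq n$, I can pick an index $m > n$ with $\vartheta_p(S_m) > \mu_n$. Writing $S_n = S_m - \sum_{k=n+1}^{m} r_k$ and applying the ultrametric inequality to $\sum_{k=n+1}^{m} r_k$, whose valuation is at least $\min_{n+1 \leq k \leq m} \vartheta_p(r_k) \geq \mu_n$, I obtain $\vartheta_p(S_n) \geq \min(\vartheta_p(S_m), \mu_n) = \mu_n$. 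This proves $\vartheta_p(S_n) \geq \mu_n \geq \ell_{n+1}$.

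It remains to characterize equality in $\vartheta_p(S_n) \geq \ell_{n+1}$. One direction is immediate from the chain just established: if $\vartheta_p(S_n) = \ell_{n+1}$ then $\ell_{n+1} = \vartheta_p(S_n) \geq \mu_n \geq \ell_{n+1}$ forces $\mu_n = \ell_{n+1}$, which is \eqref{eq3}. For the converse, suppose $\mu_n = \ell_{n+1}$. Here the monotonicity of $(\ell_k)$ does the decisive work: for every $k \geq n+2$ one has $\vartheta_p(r_k) \geq \ell_k > \ell_{n+1} = \mu_n$, so the minimum $\mu_n$ can be attained only at $k = n+1$, giving $\vartheta_p(r_{n+1}) = \mu_n = \ell_{n+1}$. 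On the other hand, applying the already-proved inequality \eqref{eq2} at the index $n+1$ yields $\vartheta_p(S_{n+1}) \geq \ell_{n+2} > \ell_{n+1} = \vartheta_p(r_{n+1})$. Since $S_n = S_{n+1} - r_{n+1}$ is then a difference of two terms of distinct valuation, the sharp form of the ultrametric inequality gives $\vartheta_p(S_n) = \vartheta_p(r_{n+1}) = \ell_{n+1}$, as desired.

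I expect the delicate point to be precisely this converse: the equality case is governed by cancellation in the $p$-adic sum, and it can be controlled only when the minimizing index is unique. That uniqueness is exactly what the strict growth of $(\ell_k)$ buys us; a merely non-decreasing $(\ell_k)$ would permit several indices $k \geq n+1$ with $\vartheta_p(r_k) = \mu_n$, and cancellation among those terms could push $\vartheta_p(S_n)$ strictly above $\ell_{n+1}$ while \eqref{eq3} still holds. Everything else is a routine application of the ultrametric inequality once the telescoping identity $S_n = S_m - \sum_{k=n+1}^{m} r_k$ is combined with the unboundedness hypothesis.
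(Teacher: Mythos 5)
Your proof is correct, and its overall architecture coincides with the paper's: the first inequality of \eqref{eq2} is obtained in both cases by using unboundedness to pick an index $m>n$ with large $\vartheta_p(S_m)$, telescoping $S_n$ against $S_m$, and applying the ultrametric inequality to the block $\sum_{k=n+1}^{m}r_k$; the forward direction of the equality criterion is the same triviality in both. The one place you genuinely diverge is the converse of the equality case. The paper first proves $\vartheta_p(r_{n+1})<\vartheta_p(r_k)$ for all $k>n+1$ and then evaluates $\vartheta_p\bigl(\sum_{k=n+1}^{m}r_k\bigr)=\vartheta_p(r_{n+1})$ exactly, re-using the auxiliary index $m$ from the first part. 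You instead deduce $\vartheta_p(r_{n+1})=\ell_{n+1}$ from the strict monotonicity of $(\ell_k)$, invoke the already-proved inequality \eqref{eq2} at index $n+1$ to get $\vartheta_p(S_{n+1})\geq\ell_{n+2}>\ell_{n+1}$, and conclude from the single-step decomposition $S_n=S_{n+1}-r_{n+1}$ and the sharp ultrametric equality. This is slightly more economical (no return to the block sum or to $m$) and makes transparent that the only input beyond \eqref{eq2} is the uniqueness of the minimizing index, which is exactly the role of strict increase of $(\ell_k)$ --- a point you correctly flag. You also verify that $\min_{k\geq n+1}\vartheta_p(r_k)$ is attained and finite, a small well-definedness check the paper passes over in silence. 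No gaps.
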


Our main result (i.e., Theorem \ref{t1}) is proved in two steeps: in the first one, we suppose (in the situation of Theorem \ref{t1}) that the sequence $\left\{\vartheta_p\left(\sum_{k = 1}^{n} \left(\frac{1}{a^k} + \frac{1}{(p - a)^k}\right) \frac{p^k}{k}\right)\right\}_{n \geq 1}$ is unbounded from above and we apply for it Theorem \ref{t2} to prove the lower bound \eqref{eq4} and to characterize the $n$'s for which it is attained. In the second one, we return to prove the non-boundness from above of the considered sequence. We do this by three different methods: the first one is based on two identities, one is combinatorial and the other is arithmetic. The second one uses a certain functional equation and the Taylor polynomials. The third one uses the $p$-adic analysis; precisely the $p$-adic logarithm function.

Let us begin by proving Theorem \ref{t2}.

\begin{proof}[Proof of Theorem \ref{t2}]
Let $n$ be a fixed positive integer. Let us show the first inequality of \eqref{eq2}. Since, by hypothesis, the sequence $\left\{\vartheta_p\left(\sum_{k = 1}^{N} r_k\right)\right\}_{N \geq 1}$ is unbounded from above then there exists $m \in \N$, with $m > n$, such that:
$$
\vartheta_p\left(\sum_{k = 1}^{m} r_k\right) > \vartheta_p\left(\sum_{k = 1}^{n} r_k\right) .
$$
Then, by using the elementary properties of the $p$-adic valuation, we have on the one hand:
$$
\vartheta_p\left(\sum_{k = n + 1}^{m} r_k\right) = \vartheta_p\left(\sum_{k = 1}^{m} r_k - \sum_{k = 1}^{n} r_k\right) = \min\left(\vartheta_p\left(\sum_{k = 1}^{m} r_k\right) , \vartheta_p\left(\sum_{k = 1}^{n} r_k\right)\right) = \vartheta_p\left(\sum_{k = 1}^{n} r_k\right) 
$$
and on the other hand:
$$
\vartheta_p\left(\sum_{k = n + 1}^{m} r_k\right) \geq \min_{n + 1 \leq k \leq m} \vartheta_p(r_k) \geq \min_{k \geq n + 1} \vartheta_p(r_k) .
$$
By comparing these two results, we deduce that:
$$
\vartheta_p\left(\sum_{k = 1}^{n} r_k\right) \geq \min_{k \geq n + 1} \vartheta_p(r_k) ,
$$
which is nothing else the first inequality of \eqref{eq2}. The second inequality of \eqref{eq2} is immediately derived from its first one together with the properties of the sequence ${(\ell_k)}_{k \geq 2}$. Indeed, we have
\begin{align*}
\vartheta_p\left(\sum_{k = 1}^{n} r_k\right) & \geq \min_{k \geq n + 1} \vartheta_p(r_k) ~~~~~~~~~~ (\text{by the first inequality of \eqref{eq2}}) \\
& \geq \min_{k \geq n + 1} \ell_k ~~~~~~~~~~~~~~~~ (\text{by using \eqref{eq1}}) \\
& = \ell_{n + 1} ~~~~~~~~~~~~~~~~~~~~ (\text{since } {(\ell_k)}_k \text{ is increasing by hypothesis}) ,
\end{align*}
confirming the second inequality of \eqref{eq2}.

Now, let us prove the second part of Theorem \ref{t2}. If $\vartheta_p\left(\sum_{k = 1}^{n} r_k\right) = \ell_{n + 1}$ then we have (according to \eqref{eq2}, proved above): $\min_{k \geq n + 1} \vartheta_p(r_k) = \ell_{n + 1}$, as required. Conversely, suppose that $\min_{k \geq n + 1} \vartheta_p(r_k) = \ell_{n + 1}$ and let us show that $\vartheta_p\left(\sum_{k = 1}^{n} r_k\right) = \ell_{n + 1}$. To do so, we first show that:
\begin{equation}\label{eq5}
\vartheta_p(r_{n + 1}) < \vartheta_p(r_k) ~~~~~~~~~~ (\forall k > n + 1) .
\end{equation}
To prove \eqref{eq5}, let us argue by contradiction. So, suppose that there is an integer $k_0 > n + 1$ which satisfies $\vartheta_p(r_{n + 1}) \geq \vartheta_p(k_0)$. So we have
$$
\min_{k \geq n + 1} \vartheta_p(r_k) = \min_{k \geq n + 2} \vartheta_p(r_k) \geq \min_{k \geq n + 2} \ell_k = \ell_{n + 2} > \ell_{n + 1}
$$
(according to \eqref{eq1} and the increase of ${(\ell_k)}_k$), contradicting the supposition $\min_{k \geq n + 1} \vartheta_p(r_k) = \ell_{n + 1}$. This contradiction confirms \eqref{eq5}. Now, we shall use \eqref{eq5} to prove the desired equality $\vartheta_p\left(\sum_{k = 1}^{n} r_k\right) = \ell_{n + 1}$. On the one hand, we have (according to the first part of this proof):
$$
\vartheta_p\left(\sum_{k = n + 1}^{m} r_k\right) = \vartheta_p\left(\sum_{k = 1}^{n} r_k\right) .
$$
But on the other hand, we have (according to \eqref{eq5} and the elementary properties of the $p$-adic valuation):
$$
\vartheta_p\left(\sum_{k = n + 1}^{m} r_k\right) = \min_{n + 1 \leq k \leq m} \vartheta_p(r_k) = \vartheta_p(r_{n + 1}) = \min_{k \geq n + 1} \vartheta_p(r_k) = \ell_{n + 1} .
$$
Comparing the two results, we derive the required equality: $\vartheta_p\left(\sum_{k = 1}^{n} r_k\right) = \ell_{n + 1}$. This confirms the second part of Theorem \ref{t2} and completes this proof.
\end{proof}   

Next, we have the following fundamental result:

\begin{thm}\label{t3}
Let $p$ be a prime number and $a$ be an integer not multiple of $p$. Then the sequence
$$
\left\{\vartheta_p\left(\sum_{k = 1}^{n} \left(\frac{1}{a^k} + \frac{1}{(p - a)^k}\right) \frac{p^k}{k}\right)\right\}_{n \geq 1}
$$
is unbounded from above.
\end{thm}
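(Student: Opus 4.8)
The cleanest route, and the one I would take, is through $p$-adic analysis: I would show that the \emph{infinite} series $\sum_{k=1}^{\infty} r_k$, where $r_k := \left(\frac{1}{a^k} + \frac{1}{(p-a)^k}\right)\frac{p^k}{k}$, converges in $\Q_p$ to $0$; the unboundedness then follows immediately from a tail estimate. First I would rewrite each term as $r_k = \frac{1}{k}\left(\frac{p}{a}\right)^k + \frac{1}{k}\left(\frac{p}{p-a}\right)^k$. Since $p \nmid a$ and $p \nmid (p-a)$, we have $\left|\tfrac{p}{a}\right|_p = \left|\tfrac{p}{p-a}\right|_p = p^{-1} < 1$, so both ratios lie strictly inside the domain of convergence of the defining series of $L_p$.

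The next step is a convergence estimate. Writing $\frac{1}{a^k}+\frac{1}{(p-a)^k} = \frac{a^k+(p-a)^k}{a^k(p-a)^k}$ and noting the denominator is a $p$-adic unit, I obtain $\vartheta_p(r_k) = \vartheta_p\!\left(a^k+(p-a)^k\right) + k - \vartheta_p(k) \ge k - \log_p(k)$, which tends to $+\infty$. Hence $|r_k|_p \to 0$ and the series $\sum_{k\ge 1} r_k$ converges in $\Q_p$.

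The heart of the argument is to evaluate this sum. By the defining series of $L_p$,
\[
\sum_{k=1}^{\infty} r_k = -L_p\!\left(1 - \tfrac{p}{a}\right) - L_p\!\left(1 - \tfrac{p}{p-a}\right) = -L_p\!\left(\tfrac{-(p-a)}{a}\right) - L_p\!\left(\tfrac{-a}{p-a}\right).
\]
Setting $u := \frac{-(p-a)}{a}$ and $v := \frac{-a}{p-a}$, one checks that $u - 1 = -\tfrac{p}{a}$ and $v - 1 = -\tfrac{p}{p-a}$, so $|u-1|_p = |v-1|_p = p^{-1} < 1$, and, crucially, $uv = 1$. The functional equation of $L_p$ then gives $L_p(u)+L_p(v) = L_p(uv) = L_p(1) = 0$, whence $\sum_{k=1}^{\infty} r_k = 0$.

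Finally, from $\sum_{k=1}^{\infty} r_k = 0$ I would deduce, for every $n$,
\[
\vartheta_p\!\left(\sum_{k=1}^{n} r_k\right) = \vartheta_p\!\left(-\sum_{k=n+1}^{\infty} r_k\right) \ge \min_{k \ge n+1}\vartheta_p(r_k),
\]
and the right-hand side tends to $+\infty$ by the estimate $\vartheta_p(r_k) \ge k - \log_p(k)$, which proves the sequence is unbounded from above. I expect the delicate points to be analytic rather than computational: verifying that the hypotheses $|u-1|_p<1$ and $|v-1|_p<1$ genuinely hold (so that the step $L_p(uv)=L_p(u)+L_p(v)$ is legitimate) and that splitting the convergent $p$-adic series into two $L_p$-values is valid. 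The conceptual key — the single fact that forces the whole sum to vanish — is simply that the two logarithmic arguments $u$ and $v$ are mutual reciprocals.
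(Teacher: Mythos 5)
Your proof is correct and coincides with the paper's own third method: the same reduction of unboundedness to the vanishing of the $p$-adic series $\sum_{k\ge 1} r_k$, the same identification of the two $L_p$-arguments $u=\frac{a-p}{a}$ and $v=\frac{-a}{p-a}$ with $uv=1$, and the same use of the functional equation $L_p(uv)=L_p(u)+L_p(v)$. Your added tail estimate $\vartheta_p(r_k)\ge k-\log_p(k)$ is a small bonus the paper reserves for its two elementary methods, but the argument is otherwise the same.
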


Admitting for the moment Theorem \ref{t3}, our main result is obtained as an application of Theorem \ref{t2}.

\begin{proof}[Proof of Theorem \ref{t1} by admitting Theorem \ref{t3}]
Let us put ourselves in the situation of Theorem \ref{t1}. We apply Theorem \ref{t2} with $r_k := \left(\frac{1}{a^k} + \frac{1}{(p - a)^k}\right) \frac{p^k}{k}$ ($\forall k \in \N$) and $\ell_k := k - \log_p\left(\frac{k}{2}\right)$ ($\forall k \geq 2$). The non-boundness from above of the sequence $\left\{\vartheta_p\left(\sum_{k = 1}^{n} r_k\right)\right\}_{n \geq 1}$ is guaranteed by Theorem \ref{t3} (admitted for the moment). Next, the increase of the sequence ${(\ell_k)}_{k \geq 2}$ can be derived from the increase of the function $x \mapsto x - \log_p\left(\frac{x}{2}\right)$ in the interval $[2 , + \infty)$. Finally, we have for any integer $k \geq 2$: 
\begin{align*}
\vartheta_p(r_k) & = \vartheta_p\left(\left(\frac{1}{a^k} + \frac{1}{(p - a)^k}\right) \frac{p^k}{k}\right) \\[1mm]
& = \vartheta_p\left(\frac{a^k + (p - a)^k}{\left(a (p - a)\right)^k} \cdot \frac{p^k}{k}\right) \\[1mm]
& = \vartheta_p\left(a^k + (p - a)^k\right) + k - \vartheta_p(k) ~~~~~~~~~~ (\text{since } a \text{ is coprime with } p) .
\end{align*}
If $k$ is even, we use $\vartheta_p\left(a^k + (p - a)^k\right) \geq \vartheta_p(2)$ (for $p > 2$, this is obvious and for $p = 2$, observe that $a^k + (p - a)^k$ is even). So, we obtain
$$
\vartheta_p(r_k) \geq k - \vartheta_p\left(\frac{k}{2}\right) \geq k - \log_p\left(\frac{k}{2}\right) = \ell_k
$$
(because $k / 2$ is a positive integer if $k$ is even). However, if $k$ is odd, we use $\vartheta_p\left(a^k + (p - a)^k\right) \geq 1$ (since $a^k + (p - a)^k \equiv \restmod{a^k + (- a)^k}{p} \equiv \restmod{0}{p}$). So, we obtain again:
$$
\vartheta_p(r_k) \geq 1 + k - \vartheta_p(k) \geq \log_p(2) + k - \log_p(k) = k - \log_p\left(\frac{k}{2}\right) = \ell_k .
$$
Consequently, we have for any integer $k \geq 2$: $\vartheta_p(r_k) \geq \ell_k$. So, all the hypothesis of Theorem \ref{t2} are satisfied; thus we can apply it for our situation. Applying the first part of Theorem \ref{t2}, we get for any positive integer $n$:
$$
\vartheta_p\left(\sum_{k = 1}^{n} \left(\frac{1}{a^k} + \frac{1}{(p - a)^k}\right) \frac{p^k}{k}\right) \geq \min_{k \geq n + 1} \vartheta_p\left(\left(\frac{1}{a^k} + \frac{1}{(p - a)^k}\right) \frac{p^k}{k}\right) \geq (n + 1) - \log_p\left(\frac{n + 1}{2}\right) ,
$$
confirming Inequality \eqref{eq4} of Theorem \ref{t1}. Next, for a given positive integer $n$, the second part of Theorem \ref{t2} tells us that \eqref{eq4} becomes an equality if and only if we have
$$
\min_{k \geq n + 1} \vartheta_p\left(\left(\frac{1}{a^k} + \frac{1}{(p - a)^k}\right) \frac{p^k}{k}\right) = (n + 1) - \log_p\left(\frac{n + 1}{2}\right) ,
$$
that is
\begin{equation}\label{eq6}
\min_{k \geq n + 1} \vartheta_p\left(\left(a^k + (p - a)^k\right) \frac{p^k}{k}\right) = (n + 1) - \log_p\left(\frac{n + 1}{2}\right) .
\end{equation}
So, it remains to prove that \eqref{eq6} holds if and only if $n$ has the form $n = 2 p^{\alpha} - 1$ ($\alpha \in \N_0$). Let us prove this last fact. \\
\textbullet{} Suppose that \eqref{eq6} holds. Then, we have
$$
\log_p\left(\frac{n + 1}{2}\right) = (n + 1) - \min_{k \geq n + 1} \vartheta_p\left(\left(a^k + (p - a)^k\right) \frac{p^k}{k}\right) \in \Z .
$$
But since $\log_p\left(\frac{n + 1}{2}\right) \geq 0$, we have even $\log_p\left(\frac{n + 1}{2}\right) \in \N_0$. By setting $\alpha := \log_p\left(\frac{n + 1}{2}\right) \in \N_0$, we get $n = 2 p^{\alpha} - 1$, as required. \\
\textbullet{} Conversely, suppose that $n = 2 p^{\alpha} - 1$ for some $\alpha \in \N_0$. We first claim that we have
\begin{equation}\label{eq7}
\vartheta_p\left(a^{n + 1} + (p - a)^{n + 1}\right) = \vartheta_p(2) .
\end{equation}
To confirm \eqref{eq7}, we distinguish two cases: \\
--- \textsc{1\up{st} case:} (If $p = 2$). In this case, because $a$ is coprime with $p$ then $a$ and $(p - a)$ are both odd, implying that $a^2 \equiv \restmod{1}{4}$ and $(p - a)^2 \equiv \restmod{1}{4}$. Then, because $n + 1 = 2 p^{\alpha}$ is even, we have also $a^{n + 1} \equiv \restmod{1}{4}$ and $(p - a)^{n + 1} \equiv \restmod{1}{4}$; thus $a^{n + 1} + (p - a)^{n + 1} \equiv \restmod{2}{4}$, implying that $\vartheta_p\left(a^{n + 1} + (p - a)^{n + 1}\right) = 1 = \vartheta_p(2)$. \\
--- \textsc{2\up{nd} case:} (If $p$ is odd). In this case, because $n + 1 = 2 p^{\alpha}$ is even, we have $a^{n + 1} + (p - a)^{n + 1} \equiv \restmod{a^{n + 1} + (- a)^{n + 1}}{p} \equiv \restmod{2 a^{n + 1}}{p} \not\equiv \restmod{0}{p}$ (since $p$ is assumed odd and $a$ is coprime with $p$). Thus $\vartheta_p\left(a^{n + 1} + (p - a)^{n + 1}\right) = 0 = \vartheta_p(2)$. Our claim \eqref{eq7} is proved. Now, using \eqref{eq7}, we have
\begin{multline*}
\vartheta_p\left(\left(a^{n + 1} + (p - a)^{n + 1}\right) \frac{p^{n + 1}}{n + 1}\right) = \vartheta_p(2) + (n + 1) - \vartheta_p(n + 1) = (n + 1) - \vartheta_p\left(\frac{n + 1}{2}\right) \\
= n + 1 - \vartheta_p\left(p^{\alpha}\right) = n + 1 - \alpha = n + 1 - \log_p\left(\frac{n + 1}{2}\right) .
\end{multline*}
This shows that \eqref{eq6} is equivalent to:
$$
\vartheta_p\left(\left(a^k + (p - a)^k\right) \frac{p^k}{k}\right) \geq (n + 1) - \log_p\left(\frac{n + 1}{2}\right) ~~~~~~~~~~ (\forall k \geq n + 2) ,
$$
which is weaker than:
\begin{equation}\label{eq8}
k - (n + 1) \geq \vartheta_p(k) - \log_p\left(\frac{n + 1}{2}\right) ~~~~~~~~~~ (\forall k \geq n + 2) .
\end{equation}
Let us prove \eqref{eq8} for a given integer $k \geq n + 2$. To do so, we distinguish two cases according to whether $\vartheta_p(k) \leq \alpha + 1$ or not. \\
--- \textsc{1\up{st} case:} (If $\vartheta_p(k) \leq \alpha + 1$). In this case, we have
$$
k - (n + 1) \geq 1 ~~~~\text{and}~~~~ \vartheta_p(k) - \log_p\left(\frac{n + 1}{2}\right) = \vartheta_p(k) - \alpha \leq 1 .
$$
Thus \eqref{eq8} is true. \\
--- \textsc{2\up{nd} case:} (If $\vartheta_p(k) \geq \alpha + 2$). In this case, we have $k \geq p^{\alpha + 2} \geq 2 p^{\alpha + 1}$. Hence
\begin{multline*}
k - (n + 1) = k - 2 p^{\alpha} \geq \frac{p - 1}{p} k \geq \frac{k}{p} \geq p^{\vartheta_p(k) - 1} \geq 2^{\vartheta_p(k) - 1} \geq \vartheta_p(k) \geq \vartheta_p(k) - \log_p\left(\frac{n + 1}{2}\right) ,
\end{multline*}
confirming \eqref{eq8} for this case also.

Consequently, \eqref{eq8} is valid for any integer $k \geq n + 2$. This completes the proof of the second part of Theorem \ref{t1} and achieves this proof.
\end{proof}

The rest of the paper is now devoted to prove Theorem \ref{t3}. We achieve this by three different methods:

\subsection*{The first method}
We lean on two identities. The first one (due to Mansour \cite{man}) is combinatorial and states that:
\begin{equation}\label{eq9}
\sum_{k = 0}^{n} \frac{x^k y^{n - k}}{\binom{n}{k}} = \frac{n + 1}{(x + y) \left(\frac{1}{x} + \frac{1}{y}\right)^{n + 1}} \sum_{k = 1}^{n + 1} \frac{\left(x^k + y^k\right) \left(\frac{1}{x} + \frac{1}{y}\right)^k}{k}
\end{equation}
(for any $x , y \in \R^*$, with $x + y \neq 0$, and any $n \in \N_0$). While the second one (due to the author \cite{far3}) is arithmetic and states that:
\begin{equation}\label{eq10}
\lcm\left\{\binom{n}{0} , \binom{n}{1} , \dots , \binom{n}{n}\right\} = \frac{\lcm\left(1 , 2 , \dots , n , n + 1\right)}{n + 1}
\end{equation}
(for any $n \in \N_0$).

Using \eqref{eq9} and \eqref{eq10}, we are now ready to prove Theorem \ref{t3}. Let $p$ be a prime number and $a$ be an integer non-multiple of $p$. By applying \eqref{eq9} for $x = a$ and $y = p - a$ and replacing $n$ by $(n - 1)$ (where $n \in \N$), we get (after simplifying and rearranging)

\begin{equation}\label{eq11}
\sum_{k = 1}^{n} \left(\frac{1}{a^k} + \frac{1}{(p - a)^k}\right) \frac{p^k}{k} = \frac{p^{n + 1}}{n \left(a (p - a)\right)^n} \sum_{k = 0}^{n - 1} \frac{a^k (p - a)^{n - 1 - k}}{\binom{n - 1}{k}} .
\end{equation} 

On the other hand, for any $n \in \N$, we have (according to \eqref{eq10}):

\begin{equation}\label{eq12}
1 = \frac{n}{\lcm(1 , 2 , \dots , n)} \lcm\left\{\binom{n - 1}{0} , \binom{n - 1}{1} , \dots , \binom{n - 1}{n - 1}\right\} .
\end{equation}

Then, for a given $n \in \N$, by multiplying side to side \eqref{eq11} and \eqref{eq12}, we obtain
\begin{multline*}
\sum_{k = 1}^{n} \left(\frac{1}{a^k} + \frac{1}{(p - a)^k}\right) \frac{p^k}{k} = \frac{p^{n + 1}}{\left(a (p - a)\right)^n \lcm(1 , 2 , \dots , n)} \\
\times \lcm\left\{\binom{n - 1}{0} , \binom{n - 1}{1} , \dots , \binom{n - 1}{n - 1}\right\} \sum_{k = 0}^{n - 1} \frac{a^k (p - a)^{n - 1 - k}}{\binom{n - 1}{k}} .
\end{multline*}
But since the rational number
$$
\lcm\left\{\binom{n - 1}{0} , \binom{n - 1}{1} , \dots , \binom{n - 1}{n - 1}\right\} \sum_{k = 0}^{n - 1} \frac{a^k (p - a)^{n - 1 - k}}{\binom{n - 1}{k}} 
$$
is obviously an integer, we derive from the last identity that:
\begin{align*}
\vartheta_p\left(\sum_{k = 1}^{n} \left(\frac{1}{a^k} + \frac{1}{(p - a)^k}\right) \frac{p^k}{k}\right) & \geq \vartheta_p\left(\frac{p^{n + 1}}{\left(a (p - a)\right)^n \lcm(1 , 2 , \dots , n)}\right) \\
& \hspace*{-1cm} = n + 1 - \vartheta_p\left(\lcm(1 , 2 , \dots , n)\right) ~~~~~ (\text{since } a \text{ is not a multiple of } p) \\
& \hspace*{-1cm} = n + 1 - \left\lfloor\log_p(n)\right\rfloor ,
\end{align*}
confirming the non-boundness from above of the sequence $\left\{\vartheta_p\left(\sum_{k = 1}^{n} \left(\frac{1}{a^k} + \frac{1}{(p - a)^k}\right) \frac{p^k}{k}\right)\right\}_{n \geq 1}$ (since $n + 1 - \left\lfloor\log_p(n)\right\rfloor \rightarrow + \infty$ as $n \rightarrow + \infty$). \hfill $\square$

\subsection*{The second method}
Let $p$ be a prime number and $a$ be an integer non-multiple of $p$. For a given $n \in \N$, consider the rational function $R_n$ defined by:
$$
R_n(X) := \sum_{k = 1}^{n} \left(\frac{1}{a^k} + \frac{1}{(X - a)^k}\right) \frac{X^k}{k} = \sum_{k = 1}^{n} \dfrac{\left(\frac{X}{a}\right)^k}{k} + \sum_{k = 1}^{n} \dfrac{\left(\frac{X}{X - a}\right)^k}{k} .
$$
Consider also the real function $f$ defined at the neighborhood of $0$ by:
$$
f(X) := - \log\left(1 - X\right) ,
$$
which satisfies the functional equation:
\begin{equation}\label{eq13}
f\left(\frac{X}{a}\right) + f\left(\frac{X}{X - a}\right) = 0
\end{equation}
and whose the $n$\up{th} degree Taylor polynomial at $0$ is $\sum_{k = 1}^{n} \frac{X^k}{k}$. 

On the one hand, according to the well-known properties of Taylor polynomials, the $n$\up{th} degree Taylor polynomial of the function $X \stackrel{g}{\mapsto} f\left(\frac{X}{a}\right) + f\left(\frac{X}{X - a}\right)$ at $0$ is the same with the $n$\up{th} degree Taylor polynomial of
$$
\sum_{k = 1}^{n} \dfrac{\left(\frac{X}{a}\right)^k}{k} + \sum_{k = 1}^{n} \dfrac{\left(\frac{X}{X - a}\right)^k}{k} = R_n(X) .
$$
But on the other hand, in view of \eqref{eq13}, this $n$\up{th} degree Taylor polynomial of $g$ at $0$ is zero. Comparing these two results, we deduce that the multiplicity of $0$ in $R_n$ is at least $(n + 1)$. Consequently, $R_n(X)$ can be written as:
$$
R_n(X) = X^{n + 1} \cdot \frac{U_n(X)}{a^n (X - a)^n \lcm(1 , 2 , \dots , n)} ,
$$
where $U_n \in \Z[X]$. In particular, we have
$$
R_n(p) = p^{n + 1} \cdot \frac{U_n(p)}{a^n (p - a)^n \lcm(1 , 2 , \dots , n)} .
$$
Next, because $U_n(p) \in \Z$ (since $U_n \in \Z[X]$) and $a$ is not a multiple of $p$, then by taking the $p$-adic valuations in the two sides of the last identity, we derive that:
$$
\vartheta_p\left(R_n(p)\right) \geq n + 1 - \vartheta_p\left(\lcm(1 , 2 , \dots , n)\right) = n + 1 - \left\lfloor\log_p(n)\right\rfloor ,
$$
implying that the sequence $\left\{\vartheta_p\left(R_n(p)\right)\right\}_{n \geq 1}$ is unbounded from above, as required by Theorem \ref{t3}. \hfill $\square$

\begin{rmk}\label{rmk1}
Curiously, the two previous methods give the same upper bound
$$
\vartheta_p\left(\sum_{k = 1}^{n} \left(\frac{1}{a^k} + \frac{1}{(p - a)^k}\right) \frac{p^k}{k}\right) \geq n + 1 - \left\lfloor\log_p(n)\right\rfloor .
$$
Furthermore, this last estimate is remarkably very close to the optimal one of Theorem \ref{t1}.
\end{rmk}

In the third method below, we will show the non-boundness of the sequence in Theorem \ref{t3} without providing any estimate!

\subsection*{The third method}
Let $p$ be a prime number and $a$ be an integer non-multiple of $p$. For all $n \in \N$, set
$$
r_n := \left(\frac{1}{a^n} + \frac{1}{(p - a)^n}\right) \frac{p^n}{n} ~~\text{and}~~ s_n := \sum_{k = 1}^{n} r_k .
$$
The property we have to show is that the sequence ${\left(\vartheta_p(s_n)\right)}_{n \geq 1}$ is unbounded from above; in other words, we have that $\limsup_{n \rightarrow + \infty} \vartheta_p(s_n) = + \infty$. So, if we show the stronger property $\lim_{n \rightarrow + \infty} \vartheta_p(s_n) = + \infty$, then we are done. To do so, observe that:
\begin{align*}
\lim_{n \rightarrow + \infty} \vartheta_p(s_n) = + \infty & \Longleftrightarrow \lim_{n \rightarrow + \infty} {\left\vert s_n\right\vert}_p = 0 \\
& \Longleftrightarrow \lim_{n \rightarrow + \infty} s_n = 0 ~~~~~~~~~~ (\text{in the } p\text{-adic sense}) \\
& \Longleftrightarrow \sum_{k = 1}^{+ \infty} r_k = 0 ~~~~~~~~~~~~~ (\text{in the } p\text{-adic sense}) .
\end{align*}
Consequently, it suffices to show that:
\begin{equation}\label{eq14}
\sum_{k = 1}^{+ \infty} \left(\frac{1}{a^k} + \frac{1}{(p - a)^k}\right) \frac{p^k}{k} = 0
\end{equation}
(in the $p$-adic sense). Let us show \eqref{eq14}. By using the $p$-adic logarithm function (recalled in §\ref{sec1}), we have
\begin{align*}
\sum_{k = 1}^{+ \infty} \left(\frac{1}{a^k} + \frac{1}{(p - a)^k}\right) \frac{p^k}{k} & = \sum_{k = 1}^{+ \infty} \frac{\left(\frac{p}{a}\right)^k}{k} + \sum_{k = 1}^{+ \infty} \frac{\left(\frac{p}{p - a}\right)^k}{k} \\
& = - L_p\left(1 - \frac{p}{a}\right) - L_p\left(1 - \frac{p}{p - a}\right) \\[1mm]
& = - \left[L_p\left(\frac{a - p}{a}\right) + L_p\left(\frac{- a}{p - a}\right)\right] \\[1mm]
& = - L_p\left(\frac{a - p}{a} \cdot \frac{- a}{p - a}\right) \\[1mm]
& = - L_p(1) = 0 ,
\end{align*}
as required. The non-boundness from above of the sequence ${\left(\vartheta_p(s_n)\right)}_{n \geq 1}$ follows. \hfill $\square$

\rhead{\textcolor{OrangeRed3}{\it References}}

\end{document}